\newcommand{\bbC}{{\mathbb{C}}}
\newcommand{\bbD}{{\mathbb{D}}}
\newcommand{\bbN}{{\mathbb{N}}}
\newcommand{\bbR}{{\mathbb{R}}}
\newcommand{\bbZ}{{\mathbb{Z}}}
\newcommand{\fre}{{\frak{e}}}
\newcommand{\x}{{\mathbf{x}}}
\newcommand{\kk}{\mathbf{k}}
\newcommand{\La}{\Lambda}
\newcommand{\calM}{{\mathcal M}}
\newcommand{\calT}{{\mathcal T}}
\newcommand{\lb}{\label}
\newcommand{\f}{\frac}
\newcommand{\ti}{\tilde  }
\newcommand{\dist}{\text{\rm{dist}}}
\newcommand{\s}{\text{\rm{s}}}
\newcommand{\bi}{\bibitem}
\newcommand{\beq}{\begin{equation}}
\newcommand{\eeq}{\end{equation}}
\newcommand{\ba}{\begin{align}}
\newcommand{\ea}{\end{align}}
\newcommand{\eps}{\varepsilon}
\let\det=\undefined\DeclareMathOperator{\det}{det}
\newcounter{smalllist}
\newenvironment{SL}{\begin{list}{{\rm\roman{smalllist})}}{%
\setlength{\topsep}{0mm}\setlength{\parsep}{0mm}\setlength{\itemsep}{0mm}%
\setlength{\labelwidth}{2em}\setlength{\leftmargin}{2em}\usecounter{smalllist}%
}}{\end{list}}
\newcommand{\comm}[1]{}
\numberwithin{equation}{section}
\newtheorem{theorem}{Theorem}[section]
\newtheorem*{p2.1}{Proposition 2.1}
\newtheorem{lemma}[theorem]{Lemma}
\newtheorem{corollary}[theorem]{Corollary}
\theoremstyle{definition}
\newtheorem{example}[theorem]{Example}
\newtheorem*{remark}{Remark}
\newcommand{\abs}[1]{\lvert#1\rvert}
\begin{document}

\title[Finite Gap Jacobi Matrices, III]{Finite Gap Jacobi Matrices,\\III. Beyond the Szeg\H{o} Class}
\author[J.~S.~Christiansen, B.~Simon, and M.~Zinchenko]{Jacob S.~Christiansen$^{1}$, Barry Simon$^{2}$,\\and Maxim Zinchenko$^3$}

\thanks{$^1$ Department of Mathematical Sciences, University of Copenhagen,
Universitetsparken 5, DK-2100 Copenhagen, Denmark. E-mail: stordal@math.ku.dk.
Supported in part by a Steno Research Grant (09-064947) from the Danish Research Council for Nature and Universe}

\thanks{$^2$ Mathematics 253-37, California Institute of Technology, Pasadena, CA 91125.
E-mail: bsimon@caltech.edu. Supported in part by NSF grant DMS-0968856}

\thanks{$^3$ Department of Mathematics, University of Central Florida, Orlando, FL 32816.
E-mail: maxim@math.ucf.edu. Supported in part by NSF grant DMS-0965411}

\keywords{Szeg\H{o} asymptotics, orthogonal polynomials, almost periodic sequences, slowly decaying perturbations}
\subjclass[2010]{42C05, 39A11, 34L15}

\begin{abstract} Let $\fre\subset\bbR$ be a finite union of $\ell+1$ disjoint closed intervals and denote by $\omega_j$ the harmonic measure of the $j$ leftmost bands. The frequency module for $\fre$ is the set of all integral combinations of $\omega_1, \dots, \omega_\ell$. Let $\{\tilde{a}_n, \tilde{b}_n\}_{n=1}^\infty$ be a point in the isospectral torus for $\fre$ and $\tilde{p}_n$ its orthogonal polynomials. Let $\{a_n,b_n\}_{n=1}^\infty$ be a half-line Jacobi matrix with $a_n = \tilde{a}_n + \delta a_n$, $b_n = \tilde{b}_n + \delta b_n$. Suppose
\[
\sum_{n=1}^\infty 
\abs{\delta a_n}^2 + \abs{\delta b_n}^2 <\infty
\]
and $\sum_{n=1}^N e^{2\pi i\omega n} \delta a_n$, $\sum_{n=1}^N e^{2\pi i\omega n} \delta b_n$ have finite limits as $N\to\infty$ for all $\omega$ in the frequency module. If, in addition, these partial sums grow at most subexponentially with respect to $\omega$, then for $z\in\bbC\setminus\bbR$, $p_n(z)/\tilde{p}_n(z)$ has a limit as $n\to\infty$. Moreover, we show that there are non-Szeg\H{o} class $J$'s for which this holds.
\end{abstract}

\maketitle

\section{Introduction} \lb{s1}

Let $\fre$ in $\bbR$ be a compact set with $\ell +1$ intervals, that is,
\begin{equation} \lb{1.1}
\fre = \bigcup_{j=1}^{\ell+1} [\alpha_j,\beta_j], \qquad \alpha_1 < \beta_1 < \alpha_2 < \cdots < \beta_{\ell+1}
\end{equation}
$\ell$ counts the number of gaps, that is, bounded intervals in $\bbR\setminus\fre$. In this paper, we continue
our study of Jacobi matrices, $J$, and the asymptotics of the associated orthogonal polynomials on the real line
(OPRL) when the essential support of the spectral measure is $\fre$. In paper I \cite{CSZ1}, we discussed the isospectral
torus, $\calT_\fre$, associated to $\fre$, a family of two-sided almost periodic Jacobi matrices associated to $\fre$.
In paper II \cite{CSZ2}, we found equivalences among spectral conditions, one of which was a Szeg\H{o} condition,
and proved Szeg\H{o} asymptotics in this case. To explain our goal in this paper, let us recall the case $\ell=0$,
that is, a single band.

We thus consider Jacobi parameters $\{a_n,b_n\}_{n=1}^\infty$ and define
\begin{equation} \lb{1.2}
\delta b_n = b_n, \qquad \delta a_n = a_n-1
\end{equation}
(our free Jacobi matrix in this case has $\tilde{a}_n\equiv 1$, $\tilde{b}_n\equiv 0$). Recall that for this $\{\tilde{a}_n,
\tilde{b}_n\}_{n=1}^\infty$ and $n\geq 0$,
\begin{equation} \lb{1.3}
\widetilde{P}_n(x) = \f{z^{n+1} - z^{-n-1}}{z-z^{-1}}\, , \qquad z(x) = \f{x+\sqrt{x^2-4}\,}{2}
\end{equation}

The following is a result of Damanik--Simon \cite{Jost1}.

\begin{theorem} \lb{T1.1} Let $p_n(x)$ be the orthogonal polynomials associated to Jacobi parameters $\{a_n,b_n\}_{n=1}^\infty$
with $a_n\to 1$, $b_n\to 0$. Then the following are equivalent:
\begin{SL}
\item[{\rm{(i)}}] $\lim_{n\to\infty} p_n(x)/\tilde{p}_n(x)$ exists for all $x\in\bbC\setminus\bbR$ with convergence uniform on compact subsets.

\item[{\rm{(ii)}}] The Jacobi parameters obey
\begin{SL}
\item[{\rm{(a)}}] $\sum_{n=1}^\infty\, \abs{a_n-1}^2 + \abs{b_n}^2 <\infty$
\item[{\rm{(b)}}] $\sum_{n=1}^N (a_n-1)$ and $\sum_{n=1}^N b_n$ have limits in $(-\infty,\infty)$.
\end{SL}
\end{SL}
\end{theorem}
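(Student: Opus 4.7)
The plan is to recast condition (i) as existence of a Jost-type limit $\lim_n u_n(z)$, where $u_n(z) = z^n p_n(z)$ and $|z|<1$ parametrizes $x = z+z^{-1}\in\bbC\setminus[-2,2]$; since $z^n \tilde{p}_n(z)\to 1/(1-z^2)$ as $n\to\infty$, this is equivalent to (i) on each sheet of $\bbC\setminus\bbR$. The Jacobi recursion for $\{p_n\}$ translates into
\[
(1+\delta a_{n+1})u_{n+1}(z) = (z^2+1-z\,\delta b_{n+1})u_n(z) - (1+\delta a_n)\,z^2\, u_{n-1}(z),
\]
whose unperturbed ($\delta\equiv 0$) solutions are $u_n \equiv 1$ and $u_n = z^{2n}$. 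Setting $v_n = u_{n+1} - u_n$ and isolating the free piece, the recursion rearranges to $v_{n+1} = z^2 v_n + w_{n+1}(z)$, where $w_{n+1}$ is linear in $\delta a_n,\delta a_{n+1},\delta b_{n+1}$ with coefficients depending analytically on $z$ and on $u_{n-1},u_n$, modulo an $\Oh(|\delta a_n|^2+|\delta a_{n+1}|^2+|\delta b_{n+1}|^2)$ remainder. Condition (i) is equivalent to $\sum_n v_n$ converging, locally uniformly in $z$.

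\emph{Direction (ii) $\Rightarrow$ (i).} A bootstrap argument using (a) and $|z|<1$ shows that $u_n$ stays bounded. Iterating the Volterra equation yields the Duhamel representation
\[
v_{n+1} = z^{2(n+1)} v_0 + \sum_{k=0}^n z^{2(n-k)}\, w_{k+1}(z),
\]
and summation over $n$ followed by interchange of order and telescoping produces an expression for $\sum_{n=0}^N v_{n+1}$ whose quadratic contribution is absolutely convergent by (a) and geometric summability of $z^{2m}$. The linear contribution, after the same rearrangement, reduces modulo negligible tails to $c_a(z)\sum_{k=1}^N \delta a_k + c_b(z)\sum_{k=1}^N \delta b_k$ for explicit analytic $c_a(z),c_b(z)$; the partial-sum conditions (b) then deliver convergence. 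All estimates are locally uniform in $z$ on compacts of $\bbC\setminus\bbR$, yielding (i).

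\emph{Direction (i) $\Rightarrow$ (ii).} Extracting the square-summability (a) is the principal obstacle. It is invisible to the linear Volterra analysis and must be read off from a Killip--Simon step-by-step sum rule for $\fre = [-2,2]$: the pointwise existence of $\lim p_n/\tilde{p}_n$ off the real axis propagates to Szeg\H{o}-type integrability of the boundary data of the spectral measure, which by the Killip--Simon theorem is equivalent to $\sum(|\delta a_n|^2+|\delta b_n|^2)<\infty$. With (a) in hand, the Duhamel expansion is fully convergent except for the non-oscillating linear combination $c_a(z)\sum\delta a_k + c_b(z)\sum\delta b_k$; since $c_a$ and $c_b$ are linearly independent analytic functions of $z$ (as seen from their Laurent expansions near $z = 0$), the existence of the limit for \emph{every} $z\in\bbC\setminus\bbR$ forces separate convergence of $\sum\delta a_k$ and $\sum\delta b_k$, which is (b).

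The genuinely hard step is unambiguously (i) $\Rightarrow$ (ii)(a): a quadratic invariant of the perturbation has to be recovered from what is structurally a linear convergence statement, and this requires the Killip--Simon sum-rule machinery (which is in any case the natural tool on which the higher-gap analogue in the main theorem will rest). Everything else is Duhamel-style rearrangement combined with separation of oscillating and non-oscillating contributions.
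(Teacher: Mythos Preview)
The paper does not prove Theorem~\ref{T1.1}; it is quoted from Damanik--Simon \cite{Jost1} as background, together with a brief description of how each direction goes. So there is no ``paper's own proof'' to match, but your outline disagrees with that description in a substantive way.

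You have the easy and hard directions swapped. The paper (reporting \cite{Jost1}) explicitly calls (i) $\Rightarrow$ (ii) the \emph{easy} direction: it ``follows in a few lines from an analysis of the first few Taylor coefficients at infinity for the $m$-functions,'' the key being the Killip--Simon--type positivity miracle for a specific combination of the zeroth and second coefficients. What you describe instead --- that existence of $\lim p_n/\tilde p_n$ off $\bbR$ ``propagates to Szeg\H{o}-type integrability of the boundary data of the spectral measure'' and then one invokes Killip--Simon --- is a genuine gap. No mechanism is given for that propagation, and the paper itself notes that there are examples where (i) and (ii) hold but the Szeg\H{o} condition \emph{fails}; so whatever spectral information Szeg\H{o} asymptotics carries, it is not the Szeg\H{o} integral. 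Moreover, Killip--Simon characterizes $\ell^2$ by quasi-Szeg\H{o} \emph{plus} a Blaschke-type eigenvalue sum \emph{plus} the right essential spectrum, none of which you have extracted from (i). The actual argument is a short local (step-by-step) sum-rule computation on the $m$-function coefficients, not an appeal to the global Killip--Simon theorem.

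Conversely, (ii) $\Rightarrow$ (i) is the \emph{hard} direction in \cite{Jost1}, and it is the analogue of this direction that the present paper proves in the finite-gap setting (Theorem~\ref{T1.2}) --- via Coffman's asymptotic-integration theorem for perturbed transfer matrices, not via Killip--Simon machinery. Your Duhamel/Volterra sketch for this direction is in the right spirit (it resembles one of the three proofs in \cite{Jost1}), but the assertion that ``a bootstrap argument using (a) and $|z|<1$ shows that $u_n$ stays bounded'' hides exactly the nontrivial part: a priori boundedness of $u_n$ from $\ell^2$ data alone is precisely what needs work, and the paper's finite-gap version handles it through Corollary~\ref{C2.2}. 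Your closing remark that Killip--Simon is ``the natural tool on which the higher-gap analogue in the main theorem will rest'' is therefore also off: the paper proves only the (ii) $\Rightarrow$ (i) analogue, does so without Killip--Simon, and explicitly says the converse is open.
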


Damanik--Simon \cite{Jost1} also have examples where (i) and (ii) hold, but the Szeg\H{o} condition fails. They call
(i) $\Rightarrow$ (ii) ``the easy direction'' since it follows in a few lines from an analysis of the first few Taylor
coefficients at infinity for the $m$-functions. A key is the same miracle that makes Killip--Simon \cite{KS} work---that
for reasons we don't understand, a combination of the zeroth and second Taylor coefficients is nonnegative. (ii)
$\Rightarrow$ (i) is called ``the hard direction'' and \cite{Jost1} provides three proofs.

In this paper, our main goal is to prove a result akin to the hard direction of Theorem~\ref{T1.1} for perturbations
of elements of $\calT_\fre$ for some finite gap set, $\fre$. Given $\fre$, let $d\rho_\fre$ be the potential theoretic
equilibrium measure for $\fre$. Let $\omega_j =\rho_\fre ([\alpha_1,\beta_j])$ for $j=1, \dots, \ell$ and ${\bm\omega}=(\omega_1,\ldots,\omega_\ell)$. The frequency module, $\calM(\fre)$, for $\fre$ is the set of all numbers of the form
\begin{equation} \lb{1.4}
{\mathbf{k}}\cdot{\bm{\omega}} = \sum_{j=1}^{\ell} k_j \omega_j
\end{equation}
for ${\mathbf{k}}=(k_1,\ldots,k_\ell)\in\bbZ^\ell$. We'll only care about $e^{2\pi i({\mathbf{k}}\cdot\bm{\omega})n}$, $n\in\bbZ$, so only about $({\mathbf{k}}\cdot{\bm{\omega}})\bmod 1$. Thus, the
frequency module is essentially a subgroup of $\bbR/\bbZ$ with at most $\ell$ generators. In the periodic case, each
$\omega_j$ is of the form $m_j/p$, where $p$ is the period and there is no simpler common denominator. In that case,
in $\bbR/\bbZ$, $\calM(\fre)$ has $p$ elements $\{{m}/{p}\}_{m=0}^{p-1}$. If some $\omega_j$ is irrational,
$\calM(\fre)$ is infinite. Here is our main theorem in this paper:

\begin{theorem}\lb{T1.2} Let $\{\tilde{a}_n, \tilde{b}_n\}_{n=-\infty}^\infty$ be an element of the isospectral torus, $\calT_\fre$,
of a finite gap set, $\fre$. Let $\{a_n,b_n\}_{n=1}^\infty$ be another set of Jacobi parameters and $\delta a_n,\delta b_n$
given by
\begin{equation} \lb{1.5}
\delta a_n = a_n-\tilde{a}_n, \qquad \delta b_n = b_n-\tilde{b}_n
\end{equation}
Suppose that
\begin{SL}
\medskip
\item[{\rm{(a)}}]
\begin{equation} \lb{1.6}
\sum_{n=1}^\infty\, \abs{\delta a_n}^2 + \abs{\delta b_n}^2 <\infty
\end{equation}
\item[{\rm{(b)}}]
For any $\kk\in\bbZ^\ell$,
\begin{equation}
\lb{1.6a}
\sum_{n=1}^Ne^{2\pi i (\kk\cdot\bm\omega) n}\delta a_n
\quad\mbox{and}\quad
\sum_{n=1}^Ne^{2\pi i (\kk\cdot\bm\omega) n}\delta b_n
\end{equation}
have $($finite$)$ limits as $N\to\infty$.

\smallskip

\item[{\rm{(c)}}]
For every $\eps>0$,
\begin{equation} \lb{1.6b}
\sup_N\biggl\{\biggl\vert\sum_{n=1}^N e^{2\pi i({\bf k}\cdot \bm\omega)n} \delta a_n\biggr\vert +
\biggl\vert\sum_{n=1}^N e^{2\pi i({\bf k}\cdot \bm\omega)n} \delta b_n\biggr\vert \biggr\}
\leq C_\eps \exp(\eps\vert{\bf k}\vert)
\end{equation}

\smallskip

\end{SL}
Let $p_n$ {\rm{(}}resp.\ $\tilde{p}_n${\rm{)}} be the orthonormal
polynomials for $\{a_n,b_n\}_{n=1}^\infty$ {\rm{(}}resp.\ $\{\tilde{a}_n, \tilde{b}_n\}_{n=1}^\infty${\rm{)}}. Then for
any $x\in\bbC\setminus\bbR$,
\begin{equation} \lb{1.7}
\lim_{n\to\infty}\, \f{p_n(x)}{\tilde{p}_n(x)}
\end{equation}
exists and is finite and nonzero.
\end{theorem}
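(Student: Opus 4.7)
The plan is to reduce the problem to showing convergence of an infinite product of nearly-identity $2\times 2$ matrices, via a Jost-solution change of basis. For $z\in\bbC\setminus\bbR$, I would first construct two linearly independent solutions $u_n^\pm(z)$ of the background recursion $\tilde J u=zu$ of Weyl type, normalized so that $n\mapsto e^{\pm nG(z)}u_n^\pm(z)$ is bounded and almost periodic in $n$ with frequencies in the module $\calM(\fre)$; here $G$ denotes the Green's function of $\fre$ with pole at infinity. Existence of such $u_n^\pm$ and identification of their frequency module follow from the Abelian / Baker--Akhiezer description of the isospectral torus developed in paper I, while paper II related $\tilde p_n$ to the $u_n^\pm$.

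Next I would apply variation of parameters: writing $p_n=\alpha_n u_n^{+}+\beta_n u_n^{-}$ and using the $J$- and $\tilde J$-recurrences together with the constancy of the Wronskian $W=u_n^{+}u_{n+1}^{-}-u_n^{-}u_{n+1}^{+}$, one derives
\begin{equation*}
\binom{\alpha_{n+1}}{\beta_{n+1}}=(I+M_n(z))\binom{\alpha_n}{\beta_n},
\end{equation*}
where $M_n(z)$ is linear in $(\delta a_n,\delta b_n)$ with coefficients bilinear in $u_n^\pm,u_{n+1}^\pm$. The diagonal entries of $M_n$ take the form $\phi_{ii}(n)\delta a_n+\psi_{ii}(n)\delta b_n$ with $\phi_{ii},\psi_{ii}$ almost periodic with frequencies in $\calM(\fre)$, while the off-diagonal entries carry an additional factor $e^{\pm 2nG(z)}$.

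Convergence of $\prod_n(I+M_n)$ applied to the initial data of $p_n$ then splits into two pieces. The quadratic remainder $\sum_n\|M_n\|^2$ is finite by (a), absorbing the $\log(1+\bddot)$ error terms. The linear part requires convergence of $\sum_n M_n$ entry-by-entry. For a diagonal entry I would expand the almost periodic coefficient as a Fourier series over $\calM(\fre)$,
\begin{equation*}
\phi(n)=\sum_{\kk\in\bbZ^\ell}\hat\phi_{\kk}\,e^{2\pi i(\kk\cdot\bm{\omega})n},
\end{equation*}
so that $\sum_{n=1}^N\phi(n)\delta a_n=\sum_{\kk}\hat\phi_{\kk}\sum_{n=1}^N e^{2\pi i(\kk\cdot\bm{\omega})n}\delta a_n$. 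Assumption (b) gives a finite limit of each inner partial sum, while the Fourier coefficients $\hat\phi_{\kk}$ decay exponentially in $|\kk|$ because $\phi$ extends to an analytic function on the compact complex torus parametrizing $\calT_\fre$. Assumption (c) then provides a matching subexponential bound on the partial sums, so a Weierstrass-type argument yields convergence of $\sum_n\phi(n)\delta a_n$, uniformly on compact subsets of $\bbC\setminus\bbR$. The same works for $\psi(n)\delta b_n$ and for the off-diagonal entries: the decaying off-diagonal is trivially summable thanks to $e^{-2nG(z)}$, while the growing off-diagonal couples $\beta_n$ into the $\alpha_n$ equation and is handled by a bootstrap—first $\beta_n$ is shown to converge from an equation whose off-diagonal coupling carries $e^{-2nG(z)}$, and then, with $\beta_n$ bounded, the $\alpha_n$-equation's growing off-diagonal is absorbed using that $u_n^{+}/u_n^{-}\to 0$ exponentially.

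Once $(\alpha_n,\beta_n)\to(\alpha_\infty,\beta_\infty)$ with $\beta_\infty$ finite and nonzero, the same variation-of-parameters representation applied to $\tilde p_n$ gives constant coefficients $(\tilde\alpha,\tilde\beta)$, whence
\begin{equation*}
\f{p_n(z)}{\tilde p_n(z)}=\f{\alpha_n u_n^{+}(z)+\beta_n u_n^{-}(z)}{\tilde\alpha\,u_n^{+}(z)+\tilde\beta\,u_n^{-}(z)}\longrightarrow\f{\beta_\infty}{\tilde\beta},
\end{equation*}
since the $u_n^{+}$ contributions vanish in the limit. Nonvanishing of $\tilde\beta$ is built into the Jost normalization for $z\notin\bbR$, and $\beta_\infty\neq 0$ follows from the fact that $p_n(z)$ grows off the real axis and so cannot lie purely in the subordinate direction. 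The main obstacle I expect is the Fourier-interchange step: one needs quantitative exponential decay of the Fourier coefficients of the torus-dependent functions $\phi_{ii},\psi_{ii}$ with a rate strictly better than the $e^{\eps|\kk|}$ growth allowed by (c), and this rate must be uniform for $z$ on compacts of $\bbC\setminus\bbR$. A secondary technical point is the careful bookkeeping for the off-diagonal recursion, where the growing Jost solution interacts with $\delta a_n$ through the already-converged $\beta_n$ factor and one must verify that no extra frequencies outside $\calM(\fre)$ are generated.
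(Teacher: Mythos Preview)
Your global strategy---pass to the Jost/Weyl basis, analyze the diagonal perturbation via Fourier expansion on the isospectral torus, and use hypotheses (b) and (c) to sum against exponentially decaying Fourier coefficients---is exactly the paper's. Your Fourier interchange step is precisely Lemma~2.3 there, and your remark about needing real-analytic dependence on the torus variable is the right way to get exponential decay of $\hat\phi_{\kk}$.

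There is, however, a genuine gap in the off-diagonal analysis. In your unnormalized basis $p_n=\alpha_n u_n^{+}+\beta_n u_n^{-}$, you yourself note that the off-diagonal entries of $M_n$ carry factors $e^{\pm 2nG(z)}$. But then $\|M_n\|\geq c\,e^{2n\Real G(z)}(|\delta a_n|+|\delta b_n|)$, so $\sum_n\|M_n\|^2$ is \emph{not} finite---your claim that ``the quadratic remainder $\sum_n\|M_n\|^2$ is finite by (a)'' fails. Consequently the $\log(1+\bddot)$ argument for $\prod_n(I+M_n)$ breaks down (and in any case that argument is a scalar argument; for noncommuting matrices one cannot pass from conditional convergence of $\sum M_n$ plus $\sum\|M_n\|^2<\infty$ to convergence of the product). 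Relatedly, your bootstrap does not close: to show $\beta_n$ converges you need the cross term $(M_n)_{21}\alpha_n$ under control, but you have no a~priori bound on $\alpha_n$; and the claim that $\alpha_n$ itself converges is generally false.

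The paper repairs this in two moves. First, it absorbs the exponential growth into the change of variables: with $R_n$ built from $\tilde p_n$ and the Jost solution $\tilde u_n$ \emph{together with} the factors $B(z)^{\pm n}$, the transformed recursion becomes
\[
\begin{pmatrix}\phi_n\\\psi_n\end{pmatrix}=(\Lambda+A_n)\begin{pmatrix}\phi_{n-1}\\\psi_{n-1}\end{pmatrix},\qquad \Lambda=\begin{pmatrix}B^{-1}&0\\0&B\end{pmatrix},
\]
with $\|A_n\|\leq C(|\delta a_n|+|\delta b_n|)$, so now $\sum\|A_n\|^2<\infty$ genuinely holds. Second, since $\Lambda\neq I$, one cannot argue by product convergence; instead the paper invokes Coffman's discrete Hartman--Wintner theorem: under $\sum\|A_n\|^2<\infty$ and conditional summability of the diagonal of $A_n$ (your Fourier step), every nonzero solution satisfies either $B^{n}(\phi_n,\psi_n)\to c_1(1,0)$ or $B^{-n}(\phi_n,\psi_n)\to c_2(0,1)$. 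The second alternative is ruled out for the $p_n$-solution because it would make $p_n(\x(z))$ decay, forcing a non-real eigenvalue. This replaces your bootstrap and yields $p_n/\tilde p_n\to c_1\neq 0$ directly. If you want to avoid citing Coffman, you would have to reprove a Levinson-type dichotomy, which is where the real work hides.
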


Thus, we have Szeg\H{o} asymptotics. We do not have the converse result (i.e., what \cite{Jost1} calls the easy direction) in part because one does not even have an analog of the Killip--Simon theorem \cite{KS} except in the case where the frequency module is $\{0,{1}/{p},{2}/{p}, \dots, {(p-1)}/{p}\}$ \cite{DKS}. We suspect the converse is true and think that even Szeg\H{o} asymptotics $\Rightarrow$ condition (b) would be interesting.

Another interesting object is the absolutely continuous spectrum. We believe that under the assumptions of Theorem~\ref{T1.2} it fills out the entire set $\fre$. In the single interval case, it was a consequence of the Killip--Simon theorem \cite{KS}. Using the ideas of Damanik--Simon \cite{Jost1}, one may attempt to control the Jost solution on the spectrum to show that the a.c.\ part of the spectral measure is supported on the finite gap set $\fre$. We will explore this idea elsewhere.

An example where the hypotheses of Theorem~\ref{T1.2} hold but the Szeg\H{o} condition fails is
\begin{equation} \lb{1.8}
\delta a_n = 0 \quad\text{and}\quad \delta b_n = \frac{1}{n^\alpha} \cos (2\pi\sqrt{n})
\end{equation}
with $\alpha\in(\f34,1)$. We refer to Section~\ref{s3} for details as well as an additional example.

\begin{remark}
In the periodic case the assumptions of Theorem~\ref{T1.2} become simpler. In particular, (c) is automatic for it follows from (b) since the frequency module has only finitely many elements. Moreover, in the period $p$ case (b) can be replaced by requiring
\begin{equation}
\sum_{n=0}^N \delta a_{np+j}
\quad\mbox{and}\quad
\sum_{n=0}^N \delta b_{np+j}
\end{equation}
to have $($finite$)$ limits as $N\to\infty$ for each $j=1,\dots,p$.
\end{remark}

\section{Szeg\H{o} Asymptotics} \lb{s2}

In this section we present a proof of Theorem \ref{T1.2} using a transfer matrix approach combined with a theorem of Coffman \cite{Co64}. The latter is a discrete version of an ODE result of Hartman--Wintner \cite{HW55} and reads:


\begin{theorem}[\cite{Co64}] \lb{T2.1}
Let $\La$ be a $d\times d$ diagonal invertible matrix with entries $\lambda_1,\dots,\lambda_d$ along the diagonal and let $\{A_n\}$ be a sequence of $d\times d$ matrices such that
\begin{align} \lb{2.1}
\sum_{n=1}^\infty \|A_n\|^2 < \infty
\end{align}
\begin{align} \lb{2.1a}
\La+A_n \;\text{ is invertible for all $n$}
\end{align}
Consider solutions $\vec{y}_n=(y_{n,1},\ldots,y_{n,d})\in\bbC^d$ of
\begin{align} \lb{2.2}
\vec{y}_{n+1}=(\La+A_n)\vec{y}_n
\end{align}
with some initial condition $\vec{y}_1$. Suppose $\lambda_j$ is a simple eigenvalue of $\Lambda$ with $|\lambda_k|\neq|\lambda_j|$ for all $k\neq j$ and let
\begin{align} \lb{2.3}
\gamma_j(n)=\prod_{k=n_0}^{n-1}(\lambda_j+(A_k)_{j,j})
\end{align}
where $n_0\geq 1$ is so large that $\gamma_j(n)\neq0$ for all $n>n_0$.
Then there exists an initial condition $\vec{y}_1$ so that
\begin{align} \lb{2.4}
\lim_{n\to\infty}\f{y_{n,j}}{\gamma_j(n)} = 1
\end{align}
while for $k\neq j$,
\begin{align} \lb{2.5}
\lim_{n\to\infty} \f{y_{n,k}}{\gamma_j(n)} = 0
\end{align}
\end{theorem}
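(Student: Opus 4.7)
This is a discrete Levinson-type asymptotic theorem for linear difference systems with \emph{square-summable} (rather than merely summable) perturbations. The standard strategy, going back to Harris--Lutz, is to precondition the system by a near-identity change of variables that converts the $\ell^2$ perturbation into an $\ell^1$ one, after which the classical Levinson theorem applies directly. The distinct-moduli hypothesis $|\lambda_k|\ne|\lambda_j|$ does real work in \emph{both} steps.

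Write $A_n = \Delta_n + R_n$ where $\Delta_n = \mathrm{diag}(A_n)$ and $R_n$ is its off-diagonal part. I would seek matrices $Q_n$ with zero diagonal and $Q_n \to 0$ so that the substitution $\vec{y}_n = (I+Q_n)\vec{u}_n$ converts \eqref{2.2} into
\[
\vec{u}_{n+1} = \bigl[\La + \Delta_n + E_n\bigr]\vec{u}_n,
\qquad \sum_n \|E_n\| < \infty.
\]
Plugging in and collecting the terms that are linear in $Q$ and in $A$, the off-diagonal piece of order $\|R_n\|$ is cancelled provided
\[
\lambda_j (Q_{n+1})_{ij} - \lambda_i (Q_n)_{ij} = (R_n)_{ij}, \qquad i\ne j.
\]
Thanks to the dichotomy $|\lambda_i|\ne|\lambda_j|$, this Sylvester-type recurrence can be solved by discrete variation of parameters, summing $(R_k)_{ij}$ forward from $n$ to $\infty$ when $|\lambda_i|>|\lambda_j|$ and backward down to $n_0$ otherwise. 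A Young-type convolution estimate (an $\ell^2$ sequence against an exponentially decaying kernel) then gives $\{Q_n\}\in \ell^2$ with $Q_n\to 0$.

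What remains in $E_n$ is bilinear in $(A_n,Q_n)$---schematically $\Delta_n Q_n$, $R_n Q_n$, $Q_{n+1}\Delta_n$, $Q_{n+1}A_n Q_n$, and a telescoping piece from $(I+Q_{n+1})^{-1}-I+Q_{n+1}$---so each summand is a pointwise product of two $\ell^2$ sequences and $\sum_n\|E_n\|<\infty$ by Cauchy--Schwarz. Normalizing $\vec{u}_n$ by the diagonal product $\tilde\gamma_j(n) = \prod_{k=n_0}^{n-1}\bigl(\lambda_j + (\Delta_k)_{jj} + (E_k)_{jj}\bigr)$ reduces the problem to producing a solution $\vec{v}_n\to\vec{e}_j$ of a system $\vec{v}_{n+1} = (I+F_n)\vec{v}_n$ with $\sum_n\|F_n\|<\infty$. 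This is the classical $\ell^1$ Levinson statement, proved by a contraction argument on $\ell^\infty$ tail sequences which again uses the distinct moduli to select, component by component, the correct direction of summation for the inhomogeneity. Since $\sum_k|(E_k)_{jj}|<\infty$, the ratio $\tilde\gamma_j(n)/\gamma_j(n)$ tends to a finite nonzero limit, so \eqref{2.4} and \eqref{2.5} follow after absorbing that limit into the initial data $\vec{y}_1$.

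The main obstacle is the Harris--Lutz step: one must verify convergence of the defining series for $Q_n$ and check that \emph{every} cross-term produced by the substitution really is $\ell^1$ and not merely $\ell^2$. Without the distinct-moduli hypothesis, the Sylvester resolvent loses the geometric decay needed for the convolution bound, the improvement from $\ell^2$ to $\ell^1$ collapses, and classical counterexamples show the conclusion can fail. Everything else is routine bookkeeping once this central transformation is in place.
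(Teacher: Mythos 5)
Note first that the paper does not prove Theorem~\ref{T2.1} at all: it is quoted as a black box from Coffman \cite{Co64} (a discrete Hartman--Wintner theorem), so there is no in-paper argument to compare against; the comparison is with the cited literature. Your route---a Harris--Lutz preconditioning $\vec y_n=(I+Q_n)\vec u_n$ that upgrades the $\ell^2$ perturbation to an $\ell^1$ one, followed by the discrete Levinson theorem, and a final comparison of $\tilde\gamma_j$ with $\gamma_j$ using $\sum_k\lvert(E_k)_{jj}\rvert<\infty$---is a standard and essentially sound way to prove this statement (it is the Benzaid--Lutz style treatment), and it is genuinely different from Coffman's original argument, which predates Harris--Lutz and proceeds by a direct successive-approximation/fixed-point analysis without a preliminary conditioning substitution. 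The individual estimates you invoke are right: a geometrically decaying kernel applied to an $\ell^2$ sequence yields an $\ell^2$ sequence tending to $0$; every cross term after the substitution is a product of two $\ell^2$ sequences, hence $\ell^1$ by Cauchy--Schwarz; the nonzero constant is removed by rescaling the initial data, and \eqref{2.1a} lets you propagate the solution constructed for large $n$ back to $n=1$. One small slip: after dividing by $\tilde\gamma_j$ the system is not of the form $(I+F_n)$ with $F_n\in\ell^1$---the other diagonal ratios $(\lambda_k+(\Delta_n)_{kk})/(\lambda_j+(\Delta_n)_{jj})$ stay bounded away from modulus $1$, not close to $1$---so what you are really invoking is Levinson's theorem for a diagonal system with an ordinary dichotomy plus an $\ell^1$ perturbation; the intended argument is clear, but state it that way.

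There is one genuine gap against the hypotheses as stated. The theorem assumes only $\lvert\lambda_k\rvert\neq\lvert\lambda_j\rvert$ for the single distinguished index $j$; for $d\geq 3$ two of the remaining eigenvalues may well have equal modulus. Your Sylvester recurrence $\lambda_k(Q_{n+1})_{ik}-\lambda_i(Q_n)_{ik}=(R_n)_{ik}$ is solved for \emph{every} off-diagonal pair, and the forward/backward summation against a geometric kernel fails for a pair $i,k\neq j$ with $\lvert\lambda_i\rvert=\lvert\lambda_k\rvert$; likewise your ``component by component'' Levinson step tacitly assumes an all-pairs dichotomy. The repair is standard but must be said: condition away only the $j$-th row and column of $R_n$ (exactly the entries for which the assumed dichotomy is available). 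In the resulting system the coupling into the $j$-th component is $\ell^1$, which is all that component needs since, after normalizing by $\gamma_j$, its kernel is merely bounded; the components $k\neq j$ have geometrically decaying kernels, so the leftover $\ell^2$ couplings inside the complementary block are harmless in the contraction (Cauchy--Schwarz on a far enough tail), with no need for equal-modulus eigenvalues there to be separated. With that modification (equivalently, grouping equal-modulus eigenvalues into blocks) your plan does prove the theorem as stated; in the paper's application $d=2$ with $\Lambda=\mathrm{diag}(B^{-1},B)$, the issue is vacuous.
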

Under the additional assumption of conditional summability of the perturbation, we get the following corollary of Coffman's result:
\begin{corollary} \lb{C2.2}
With $\La$ and $A_n$ as above, suppose that $|\lambda_1|> \dots > |\lambda_d|>0$ and
\begin{align} \lb{2.6}
\lim_{N\to\infty} \sum_{n=1}^N (A_n)_{j,j}
\end{align}
exists for every $j$. Then for any initial condition $\vec{y}_1\neq \bf{0}$, there exists $j$ such that the solution of \eqref{2.2} obeys
\begin{align} \lb{2.7}
\lim_{n\to\infty} \frac{{y}_{n,j}}{\lambda_j^n} = c_j \neq 0
\end{align}
while for $k\neq j$,
\begin{align} \lb{2.7a}
\lim_{n\to\infty} \frac{{y}_{n,k}}{\lambda_j^n} = 0
\end{align}
\end{corollary}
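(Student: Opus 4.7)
The plan is to translate Coffman's theorem (Theorem~\ref{T2.1}) into clean $\lambda_j^n$-asymptotics by using the extra hypothesis \eqref{2.6}, then to construct a basis of $d$ solutions whose dominant scales are $\lambda_1^n,\dots,\lambda_d^n$, and finally to read off the behavior of an arbitrary solution by selecting the surviving term of largest growth in its basis expansion.

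First I would verify that $\gamma_j(n)/\lambda_j^n$ has a nonzero finite limit for every $j$. Since $\|A_n\|\to 0$ by \eqref{2.1}, one can take $n_0$ so large that $|(A_k)_{jj}/\lambda_j|\leq 1/2$ for all $k\geq n_0$ and all $j$. Writing
\begin{equation*}
\gamma_j(n)=\lambda_j^{n-n_0}\prod_{k=n_0}^{n-1}\left(1+\frac{(A_k)_{jj}}{\lambda_j}\right)
\end{equation*}
and expanding $\log(1+x)=x+O(x^2)$, the first-order telescoped sum $\sum_{k=n_0}^{n-1}(A_k)_{jj}/\lambda_j$ converges by \eqref{2.6}, while the quadratic remainder $O\bigl(\sum|(A_k)_{jj}|^2\bigr)$ converges by \eqref{2.1}. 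Hence $\gamma_j(n)/\lambda_j^n\to\tilde c_j$ for some nonzero $\tilde c_j\in\bbC$.

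Because $|\lambda_1|>\dots>|\lambda_d|>0$, every $\lambda_j$ is a simple eigenvalue with a unique modulus, so Theorem~\ref{T2.1} applies separately for each $j$. This yields $d$ solutions $\vec{y}^{(1)},\dots,\vec{y}^{(d)}$ of \eqref{2.2} with
\begin{equation*}
\lim_{n\to\infty}\frac{y^{(j)}_{n,j}}{\lambda_j^n}=\tilde c_j\neq 0,\qquad \lim_{n\to\infty}\frac{y^{(j)}_{n,k}}{\lambda_j^n}=0\quad(k\neq j).
\end{equation*}
These are linearly independent: if $\sum_j\beta_j\vec{y}^{(j)}\equiv 0$ and $j^\ast=\min\{j:\beta_j\neq 0\}$, then dividing the $j^\ast$-th component by $\lambda_{j^\ast}^n$ yields $\beta_{j^\ast}\tilde c_{j^\ast}$ in the limit, because the $j>j^\ast$ contributions decay via $|\lambda_j/\lambda_{j^\ast}|<1$; this forces $\beta_{j^\ast}=0$, a contradiction. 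By invertibility of each $\Lambda+A_n$, the initial vectors $\vec{y}^{(1)}_1,\dots,\vec{y}^{(d)}_1$ also form a basis of $\bbC^d$.

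An arbitrary $\vec{y}_1\neq \mathbf{0}$ then expands as $\vec{y}_1=\sum_i\alpha_i\vec{y}^{(i)}_1$; setting $j=\min\{i:\alpha_i\neq 0\}$, the same growth-rate comparison used for linear independence gives $y_{n,j}/\lambda_j^n\to\alpha_j\tilde c_j\neq 0$ and $y_{n,k}/\lambda_j^n\to 0$ for $k\neq j$, which is exactly \eqref{2.7}--\eqref{2.7a} with $c_j=\alpha_j\tilde c_j$. The only real subtlety is the product asymptotics in the first step, where the conditional summability hypothesis \eqref{2.6} exactly cancels the first-order terms in the log-expansion and the $\ell^2$ hypothesis \eqref{2.1} absorbs the quadratic remainder; once this is in hand, everything reduces to a basis selection driven by the strict inequalities $|\lambda_1|>\dots>|\lambda_d|$.
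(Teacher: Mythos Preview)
Your proof is correct and follows essentially the same approach as the paper: first use \eqref{2.6} and \eqref{2.1} to show $\gamma_j(n)/\lambda_j^n$ converges to a nonzero limit via the log-expansion, then apply Theorem~\ref{T2.1} for each $j$ to produce a basis of solutions with the desired scales, and finally expand an arbitrary solution in this basis, picking out the smallest index with nonzero coefficient. Your treatment is slightly more explicit than the paper's in justifying linear independence of the $\vec{y}^{(j)}$, but the overall argument is the same.
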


\begin{proof}
Recall that if $\sum_{k} |a_k|^2<\infty$ and $\sum_{k} a_k$ is conditionally convergent then
\begin{equation}
\sum_{k} \vert \log(1+a_k)-a_k \vert<\infty
\end{equation}
and hence $\prod_{k} (1+a_k)$ converges. Thus, it follows from \eqref{2.1}, \eqref{2.6}, and \eqref{2.3} that $\gamma_j(n)/\lambda_j^n$ has a finite nonzero limit, say $c_j$, as $n\to\infty$.

By Theorem \ref{T2.1}, there is for every $j$ a solution $\vec{y}_n(j)$ of \eqref{2.3} such that
\begin{equation}
\frac{\vec{y}_n(j)}{\gamma_j(n)}\to\vec{e}_j \,\mbox{ as }\, n\to\infty
\end{equation}
where $\{\vec{e}_1,\ldots,\vec{e}_d\}$ is the standard basis for $\bbC^d$.
Hence, $\vec{y}_n(j)/\lambda_j^n$ converges to $c_j\vec{e}_j$ as $n\to\infty$.

Note that the vectors $\vec{y}_n(1),\ldots,\vec{y}_n(d)$ are linearly independent for all $n$ and form a basis of $\bbC^d$.
Since $|\lambda_1| > |\lambda_2| > \dots > |\lambda_d|>0$, it follows that \eqref{2.7} holds for any solution $\vec{y}_n$
of \eqref{2.2}. For if $\vec{y}_n=\sum_{j=1}^d r_j\vec{y}_n(j)$, then
\begin{equation}
\frac{\vec{y}_n}{\lambda_k^n}=
\sum_{j=1}^d \frac{r_j\vec{y}_n(j)}{\lambda_j^n}\frac{\lambda_j^n}{\lambda_k^n}\to
r_k c_k \vec{e}_k \,\mbox{ as }\, n\to\infty
\end{equation}
where $k$ is the smallest value of $j$ for which $r_j\neq 0$.
\end{proof}

In what follows, $\x(z)$ will denote the universal covering map of $\bbD$ onto $\bbC\cup\{\infty\}\setminus\fre$ as introduced in \cite[Sect.~2]{CSZ1}. It is the unique meromorphic map which is locally one-to-one with
\begin{equation}
\x(z)=\frac{x_\infty}{z}+\mathcal{O}(1)
\end{equation}
near $z=0$ and $x_\infty>0$. Following \cite[Sect.~4]{CSZ1}, we let $B(z)$ be the Blaschke product with zeros at the poles of $\x$.
For every measure in the Szeg\H{o} class for $\fre$, one can define a Jost solution (cf. \cite[Sect.~9]{CSZ1}). While the parameters $\{a_n, b_n\}_{n=1}^\infty$ from Theorem \ref{T1.2} may not correspond to a measure in the Szeg\H{o} class, every point $\{\tilde{a}_n,\tilde{b}_n\}_{n=-\infty}^\infty$ in the isospectral torus does and its Jost solution, $\tilde{u}_n(z)$, satisfies the same three-term recurrence relation as $\tilde{p}_{n-1}(\x(z))$. We shall henceforth fix $z\in\bbD$ such that $\x(z)\in\bbC\setminus\bbR$.


Let us begin by writing the three-term recurrence relation for the orthonormal polynomials $p_n$ in matrix form
\begin{align} \lb{2.8}
\begin{pmatrix}
p_{n}(x)\\ a_n p_{n-1}(x)
\end{pmatrix}
=
\frac{1}{a_n}
\begin{pmatrix}
{x-b_n} & -1
\\
a_n^2 & 0
\end{pmatrix}
\begin{pmatrix}
p_{n-1}(x) \\
a_{n-1}p_{n-2}(x)
\end{pmatrix}
\end{align}
Similarly, 
\begin{equation} \lb{2.9}
\begin{aligned}
&\begin{pmatrix}
\tilde{p}_{n}(\x(z)) & \tilde{u}_{n+1}(z) \\
\tilde{a}_n \tilde{p}_{n-1}(\x(z)) & \tilde{a}_n \tilde{u}_n(z)
\end{pmatrix} \\
&\qquad=
\f{1}{\tilde{a}_n}
\begin{pmatrix}
{\x(z)-\tilde{b}_n} & -{1}
\\
\tilde{a}_n^2 & 0
\end{pmatrix}
\begin{pmatrix}
\tilde{p}_{n-1}(\x(z)) & \tilde{u}_n(z) \\
\tilde{a}_{n-1} \tilde{p}_{n-2}(\x(z)) & \tilde{a}_{n-1}\tilde{u}_{n-1}(z)
\end{pmatrix}
\end{aligned}
\end{equation}
To be dealing with bounded entries (for fixed $z$), we introduce
\begin{align} \lb{2.10}
R_n(z) =
\begin{pmatrix}
\tilde{p}_{n}(\x(z)) & \tilde{u}_{n+1}(z) \\
\tilde{a}_n \tilde{p}_{n-1}(\x(z)) & \tilde{a}_n \tilde{u}_n(z)
\end{pmatrix}
\begin{pmatrix}
B(z)^n & 0 \\
0 & B(z)^{-n}
\end{pmatrix}
\end{align}
Indeed, $B^{-n}\tilde{u}_n$ is almost periodic \cite[Thm.~9.2]{CSZ1} while $B^n\tilde{p}_{n-1}$ is almost periodic up to an exponentially small error \cite[Thm.~7.3]{CSZ2}.
By \eqref{2.9}, $\det R_n$ is $n$-independent. Hence,
\begin{equation}
\tilde{a}_n\bigl(\tilde{u}_n\tilde{p}_n-\tilde{u}_{n+1}\tilde{p}_{n-1}\bigr)=
\det R_n=\det R_0=\tilde{a}_0\tilde{u}_0\neq 0
\end{equation}
and so all $R_n$ are invertible. Changing the variables in \eqref{2.8} to $(\phi_n, \psi_n)$ defined by
\begin{align} \lb{2.11}
\begin{pmatrix}
\phi_n(z) \\
\psi_n(z)
\end{pmatrix}
=
R_n(z)^{-1}
\begin{pmatrix}
p_{n}(\x(z)) \\
a_n p_{n-1}(\x(z))
\end{pmatrix}
\end{align}
we get the recursion
\begin{align} \lb{2.12}
\begin{pmatrix}
\phi_n(z) \\
\psi_n(z)
\end{pmatrix}
=
\f{R_n(z)^{-1}}{a_n}
\begin{pmatrix}
{\x(z)-b_n} & -{1}
\\
a_n^2 & 0
\end{pmatrix}
R_{n-1}(z)
\begin{pmatrix}
\phi_{n-1}(z) \\
\psi_{n-1}(z)
\end{pmatrix}
\end{align}
It follows from \eqref{2.9}--\eqref{2.10} that
\begin{align} \lb{2.13}
R_n(z)^{-1} =
\begin{pmatrix}
B(z)^{-1} & 0 \\
0 & B(z)
\end{pmatrix}
\f{R_{n-1}(z)^{-1}}{\tilde{a}_n}
\begin{pmatrix}
0 & {1}
\\
-\tilde{a}_n^2 & {\x(z)-\tilde{b}_n}
\end{pmatrix}
\end{align}
and substitution of \eqref{2.13} into \eqref{2.12} leads to
\begin{align} \lb{2.14}
\begin{pmatrix}
\phi_n \\
\psi_n
\end{pmatrix}
=
\begin{pmatrix}
B^{-1} & 0 \\
0 & B
\end{pmatrix}
\left[
\begin{pmatrix}
1 & 0 \\
0 & 1
\end{pmatrix}
+
\bigl(R_{n-1}\bigr)^{-1}Q_nR_{n-1}\right]
\begin{pmatrix}
\phi_{n-1} \\
\psi_{n-1}
\end{pmatrix}
\end{align}
where
\begin{align}
\begin{split} \lb{2.14a}
Q_n(z) &= \f{1}{a_n \tilde{a}_n}
\begin{pmatrix}
0 & {1}
\\
-\tilde{a}_n^2 & {\x(z)-\tilde{b}_n}
\end{pmatrix}
\begin{pmatrix}
{\x(z)-b_n} & -{1}
\\
a_n^2 & 0
\end{pmatrix}
-
\begin{pmatrix}
1&0\\0&1
\end{pmatrix}
\\
& = \f{1}{a_n \tilde{a}_n}
\begin{pmatrix}
a_n\delta a_n & 0 \\[2mm]
(a_n+\tilde{a}_n)(\x(z)-\tilde{b}_n)\delta a_n + \tilde{a}_n^2 \delta b_n & -\tilde{a}_n \delta a_n
\end{pmatrix}
\end{split}
\end{align}
We wish to apply Corollary \ref{C2.2} with
\begin{equation} \lb{2.14b}
\Lambda=
\begin{pmatrix}
B^{-1} & 0 \\ 0 & B
\end{pmatrix}
\quad\mbox{and}\quad
A_n=
\begin{pmatrix}
B^{-1} & 0 \\ 0 & B
\end{pmatrix}
(R_{n-1})^{-1}Q_n R_{n-1}
\end{equation}
A straightforward computation shows that $B \det(R_{n-1})(A_n)_{1,1}$ can be written as
\begin{align}
\begin{split}
&\f{\tilde{p}_{n-1}}{\tilde{a}_n}
\Bigl(\tilde{a}_{n-1}\tilde{u}_{n-1}-\tilde{u}_n\bigl(\x(z)-\tilde{b}_n\bigr)\Bigr) \delta a_n \\
& \quad +
\f{\tilde{u}_n}{a_n}
\Bigl(\tilde{a}_{n-1}\tilde{p}_{n-2}-\tilde{p}_{n-1}\bigl(\x(z)-\tilde{b}_n\bigr)\Bigr)
\delta a_n -\frac{\tilde{a}_n}{a_n}\tilde{u}_n\tilde{p}_{n-1}\delta b_n \\
& \quad =
-\Bigl(\tilde{u}_{n+1}\tilde{p}_{n-1}+\f{\tilde{a}_n}{a_n}\tilde{u}_n\tilde{p}_n\Bigr) \delta a_n -
\frac{\tilde{a}_n}{a_n}\tilde{u}_n\tilde{p}_{n-1}\delta b_n
\end{split}
\end{align}
so that
\begin{equation} \lb{2.15}
\begin{aligned}
B (A_n)_{1,1}&=
\biggl( \frac{1/\tilde{a}_n}{1-\tilde{u}_{n}\tilde{p}_n/(\tilde{u}_{n+1}\tilde{p}_{n-1})}-
 \frac{1/{a}_n}{1-\tilde{u}_{n+1}\tilde{p}_{n-1}/(\tilde{u}_{n}\tilde{p}_{n})}\biggr) \delta a_n \\
&\qquad +\frac{1/a_n}{\tilde{u}_{n+1}/\tilde{u}_n-\tilde{p}_n/\tilde{p}_{n-1}}\delta b_n
\end{aligned}
\end{equation}
Moreover,
\begin{equation} \lb{2.16}
B^{-1}(A_n)_{2,2}=\frac{(\delta a_n)^2}{a_n\tilde{a}_n}-B(A_n)_{1,1}
\end{equation}
To prove that \eqref{2.6} holds, we need the following lemma.
\begin{lemma} \lb{L2.3}
For any real analytic almost periodic sequence $\{f_n\}_{n=1}^\infty$ with frequency module contained in $\calM(\fre)$,
\begin{equation} \lb{2.17}
\sum_{n=1}^N f_n\, \delta a_n \quad\text{and}\quad \sum_{n=1}^N f_n\, \delta b_n
\end{equation}
have limits as $N\to\infty$.
\end{lemma}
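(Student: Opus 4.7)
\medskip

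\noindent\textbf{Proof proposal for Lemma~\ref{L2.3}.} The plan is to expand $f_n$ in a Fourier series over the frequency module and then use hypotheses (b) and (c) to interchange the sum with the limit $N\to\infty$.

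First, since $\{f_n\}$ is almost periodic with frequency module contained in $\calM(\fre)$, there is a function $F$ on the torus $(\bbR/\bbZ)^\ell$ such that $f_n = F(n\bm\omega)$. The assumption that $f_n$ is real analytic means $F$ extends analytically to a neighborhood of the torus in $\bbC^\ell$, and a standard Paley--Wiener argument on the torus then gives exponential decay of its Fourier coefficients: there exist $\eta>0$ and $C<\infty$ with
\begin{equation}
F(\theta)=\sum_{\kk\in\bbZ^\ell} c_\kk \, e^{2\pi i \kk\cdot\theta}, \qquad |c_\kk|\leq C e^{-\eta|\kk|}.
\end{equation}
Substituting $\theta=n\bm\omega$ produces an absolutely convergent expansion $f_n=\sum_\kk c_\kk e^{2\pi i(\kk\cdot\bm\omega)n}$.

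The next step is to insert this expansion into $\sum_{n=1}^N f_n\delta a_n$ and swap the order of summation. Set
\begin{equation}
S_N(\kk)=\sum_{n=1}^N e^{2\pi i (\kk\cdot\bm\omega)n}\delta a_n,
\end{equation}
so that $\sum_{n=1}^N f_n\delta a_n=\sum_\kk c_\kk S_N(\kk)$. Hypothesis (b) says $S_N(\kk)$ has a finite limit $S_\infty(\kk)$ as $N\to\infty$ for each fixed $\kk$, and hypothesis (c) with any choice $\eps<\eta$ gives $|S_N(\kk)|\leq C_\eps e^{\eps|\kk|}$ uniformly in $N$. Therefore $|c_\kk S_N(\kk)|\leq CC_\eps e^{-(\eta-\eps)|\kk|}$, which is summable over $\bbZ^\ell$. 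By the Weierstrass $M$-test (equivalently, dominated convergence on $\bbZ^\ell$ with counting measure), we may interchange limit and sum:
\begin{equation}
\lim_{N\to\infty}\sum_{n=1}^N f_n\,\delta a_n = \sum_{\kk\in\bbZ^\ell} c_\kk\, S_\infty(\kk),
\end{equation}
and the right-hand side is finite by the same bound. The identical argument with $\delta a_n$ replaced by $\delta b_n$ handles the second sum in \eqref{2.17}.

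The main technical point is the Paley--Wiener estimate $|c_\kk|\leq Ce^{-\eta|\kk|}$ for a real analytic function on $(\bbR/\bbZ)^\ell$, which is obtained by shifting the contour of integration in $c_\kk=\int F(\theta)e^{-2\pi i\kk\cdot\theta}d\theta$ into the strip of analyticity of $F$ in each coordinate separately. The importance of hypothesis (c), as opposed to merely (b), is precisely that it matches this exponential decay and so permits the term-by-term passage to the limit; without subexponential control in $|\kk|$, one could not conclude absolute convergence of the resummed series.
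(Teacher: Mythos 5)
Your proposal is correct and follows essentially the same route as the paper: expand $f_n$ in a Fourier series with exponentially decaying coefficients (from real analyticity), use hypothesis (b) for each fixed $\kk$ and hypothesis (c) with $\eps$ smaller than the decay rate to control the tail in $\kk$ uniformly in $N$. The only cosmetic difference is that you phrase the interchange as dominated convergence on $\bbZ^\ell$, whereas the paper verifies the Cauchy criterion directly by splitting the frequencies at a cutoff $|\kk|\leq K$; the two arguments are equivalent.
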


\begin{proof}
For simplicity, we only consider $\delta a_n$. By assumption,
\begin{equation}
f_n=\sum_{\mathbf{k}}c_{\kk}e^{2\pi i(\kk\cdot\bm\omega)n}
\end{equation}
where the Fourier coefficients $c_\kk$ obey
\begin{equation}
\label{exp}
\vert c_\kk\vert\leq C e^{-D\vert\kk\vert}
\end{equation}
for some $C, D>0$. Given $\eps>0$, we will show that
\begin{equation}
\label{NM}
\left\vert \sum_{n=1}^N f_n \,\delta a_n -
\sum_{n=1}^M f_n \,\delta a_n \right\vert<\eps
\end{equation}
for $N, M$ sufficiently large. Assume $N>M$ and start by taking $K$ so large that
\begin{equation}
\label{k big}
\sum_{\vert\kk\vert>K} \vert c_\kk \vert
\left\vert\sum_{n=M+1}^N e^{2\pi i(\kk\cdot\bm\omega) n}\delta a_n \right\vert<\eps/2
\end{equation}
for all $N, M$. We can do so using \eqref{1.6b} and \eqref{exp}. Next, take $N, M$ so large that
\begin{equation}
\label{large}
\vert c_\kk \vert
\left\vert\sum_{n=M+1}^N e^{2\pi i(\kk\cdot\bm\omega) n}\delta a_n \right\vert
<\frac{\eps/2}{\#\bigl\{\kk : \vert\kk\vert\leq K\bigr\}}
\end{equation}
for all $\kk$ with $\vert\kk\vert\leq K$. Since only finitely many $\kk$'s occur here, this can be done by \eqref{1.6a}.
Combining \eqref{k big} and \eqref{large} leads to \eqref{NM} and the result follows.
\end{proof}

By \cite[Thm.~7.3]{CSZ2}, the sequence of polynomials $\ti p_{n-1}$ can be written as a linear combination,
\begin{align} \lb{2.21}
\ti p_{n-1} = r_1  B^{-n}\ti v_n + r_2 \ti e_n
\end{align}
with $r_j\neq0$, $j=1,2$, where $\ti e_n$ is exponentially decaying and $\ti v_n$ is almost periodic. In fact, $\ti{a}_{n+1}\ti v_n$ is given by the Jost function (cf.\ \cite[Sect.~8]{CSZ1}) sampled along an equally spaced orbit on the isospectral torus $\calT_\fre$, hence $|\ti v_n| \ge c >0$. Thus, \eqref{2.15} can be rewritten as
\begin{equation} \lb{2.22}
\begin{aligned}
B (A_n)_{1,1}&=
\biggl( \frac{1/\tilde{a}_n}{1-\tilde{u}_{n}\tilde{v}_{n+1}/(B\tilde{u}_{n+1}\tilde{v}_{n})} -
 \frac{1/{a}_n}{1-B\tilde{u}_{n+1}\tilde{v}_{n}/(\tilde{u}_{n}\tilde{v}_{n+1})}\biggr) \delta a_n \\
&\qquad + \frac{1/a_n}{\tilde{u}_{n+1}/\tilde{u}_n-\tilde{v}_{n+1}/(B\tilde{v}_{n})}\delta b_n + e_n
\end{aligned}
\end{equation}
where $e_n$ is again an exponentially decaying sequence. Using the identity $1/a_n = 1/\ti a_n - \delta a_n/(a_n\ti a_n)$, we can rewrite \eqref{2.22} once more as
\begin{equation} \lb{2.23}
\begin{aligned}
B (A_n)_{1,1}&=
\biggl( \frac{1/\tilde{a}_n}{1-\tilde{u}_{n}\tilde{v}_{n+1}/(B\tilde{u}_{n+1}\tilde{v}_{n})} -
 \frac{1/\tilde{a}_n}{1-B\tilde{u}_{n+1}\tilde{v}_{n}/(\tilde{u}_{n}\tilde{v}_{n+1})}\biggr) \delta a_n \\
&\qquad + \frac{1/\tilde{a}_n}{\tilde{u}_{n+1}/\tilde{u}_n-\tilde{v}_{n+1}/(B\tilde{v}_{n})}\delta b_n + e_n + s_n
\end{aligned}
\end{equation}
Here $s_n$ involves two terms, each given by a product of either $(\delta a_n)^2$ or $\delta a_n\delta b_n$ and a bounded factor. Hence the sequence $s_n$ is absolutely summable by \eqref{1.6}.

Now the definitions of the Jost function in \cite[Sect.~8]{CSZ1} and the Jost solution in \cite[Sect.~9]{CSZ1} combined with \cite[Cor.~6.4]{CSZ1} show that $1/\ti a_n$, $\ti u_{n+1}/\ti u_n$, and $\ti v_{n+1}/\ti v_{n}$ are real analytic quasiperiodic sequences. So \eqref{2.16}, \eqref{2.23}, absolute summability of $e_n$ and $s_n$, and Lemma~\ref{L2.3} imply that
\begin{equation}
\sum_{n=1}^N (A_n)_{1,1} \quad\text{and}\quad \sum_{n=1}^N (A_n)_{2,2}
\end{equation}
have limits as $N\to\infty$. Moreover, \eqref{2.14a}--\eqref{2.14b} combined with  \eqref{1.6} imply that
\begin{equation}
\sum_{n=1}^\infty \big\|A_n\|^2 < \infty
\end{equation}
Thus, the recursion \eqref{2.14} satisfies the conditions of Corollary \ref{C2.2} and hence we either have
\begin{align} \lb{2.27}
\lim_{n\to\infty}
B^n \begin{pmatrix}\phi_n\\\psi_n\end{pmatrix}
=c_1\begin{pmatrix}1\\0\end{pmatrix}
\end{align}
or
\begin{align}
\label{2.27a}
\lim_{n\to\infty}
B^{-n} \begin{pmatrix}\phi_n\\\psi_n\end{pmatrix}
=c_2\begin{pmatrix}0\\1\end{pmatrix}
\end{align}
for some nonzero constants $c_1, c_2$.
The latter, however, is impossible. For since $B^{n}\tilde{p}_{n-1}$ and $B^{-n}\tilde{u}_n$ are bounded, \eqref{2.27a} would, by \eqref{2.10} and \eqref{2.11}, imply that
\begin{align} \lb{2.28}
p_n(\x(z)) = \phi_n(z)B(z)^n \tilde{p}_n(\x(z)) + \psi_n(z) B(z)^{-n} \tilde{u}_{n+1}(z)
\end{align}
decays exponentially as $n\to\infty$. Hence, the associated Jacobi matrix has an eigenvalue at $\x(z)$ which by assumption is a point in $\bbC\setminus\bbR$. This cannot be true and the first alternative \eqref{2.27} must therefore hold.
Rewriting \eqref{2.28} as
\begin{align}
\f{p_{n}(\x(z))}{\tilde{p}_n(\x(z))} =
B(z)^n\phi_n(z) + \f{\tilde{u}_{n+1}(z)B(z)^{-n}}{\tilde{p}_{n}(\x(z))B(z)^{n}}B(z)^{n}\psi_n(z)
\end{align}
then leads to the conclusion of Theorem \ref{T1.2}.

\section{Examples} \lb{s3}

In this section we supplement Theorem~\ref{T1.2} with examples for which \eqref{1.6}--\eqref{1.6b} hold. In particular, we give an example showing that the Szeg\H{o} condition is not necessary for Szeg\H{o} asymptotics to hold. To put our results in the right perspective, recall that, by \cite[Thm.~7.4]{CSZ2}, Szeg\H{o} asymptotics \eqref{1.7} holds when the measure of orthogonality $d\mu(x)=w(x)dx+d\mu_\s(x)$ has essential support $\fre$ and belongs to the Szeg\H{o} class for $\fre$, that is,
\begin{align} \lb{3.1}
&\int_\fre \f{\log(w(x))}{\dist(x,\bbR\setminus\fre)^{1/2}}\, dx >-\infty &\text{(Szeg\H{o} condition)}
\intertext{and}
\lb{3.2}
&\sum_k \dist (x_k,\fre)^{1/2} <\infty  &\text{(Blaschke condition)}
\end{align}
where $\{x_k\}$ denote the mass points of $d\mu$ in $\bbR\setminus\fre$.

In continuation of \cite{FSW} and \cite{CSZ2}, it was shown in \cite[Thm.~1.3]{FS11} that the generalized Nevai conjecture holds, that is, the measure of orthogonality is in the Szeg\H{o} class if the sequence of recurrence coefficients $\{a_n,b_n\}_{n=1}^\infty$ is an $\ell^1$-perturbation of some element $\{\tilde{a}_n, \tilde{b}_n\}_{n=-\infty}^\infty$ in $\calT_\fre$. Thus,
\begin{equation} \lb{3.3}
\sum_{n=1}^\infty\, \abs{a_n - \ti a_n} + \abs{b_n - \ti b_n} <\infty
\end{equation}
is a sufficient condition for Szeg\H{o} asymptotics. Our main result, Theorem~\ref{1.2}, shows that \eqref{3.3} can be relaxed and the weaker set of conditions \eqref{1.6}--\eqref{1.6b} is also sufficient for Szeg\H{o} asymptotics to hold.

In the following example we use Theorem~\ref{T1.2} to show that \eqref{1.7} may hold for a non $\ell^1$-perturbation of an element in $\calT_\fre$; hence, the $\ell^1$-condition \eqref{3.3} is not necessary for Szeg\H{o} asymptotics.

\begin{example}
Let $\{\tilde{a}_n, \tilde{b}_n\}_{n=-\infty}^\infty$ be an arbitrary element of the isospectral torus $\calT_\fre$. Pick $\alpha\in(\f12,1)$, $\omega\in(0,1)$, and let
\begin{align} \lb{3.4a}
a_n=\ti a_n+\delta a_n, \quad b_n=\ti b_n + \delta b_n
\end{align}
with
\begin{align} \lb{3.5a}
\delta a_n = 0, \quad \delta b_n=\f{1}{n^\alpha}\cos(2\pi \omega n)
\end{align}
In addition, assume that the almost periods $\omega_j =\rho_\fre ([\alpha_1,\beta_j])$ obey a Diophantine condition relative to the frequency $\omega$, that is, there exist a constant $C>0$ and an integer $s$ such that for all $\bm k = (k_1, \dots, k_\ell)\in\bbZ^\ell$,
\begin{equation} \lb{3.6a}
\bigg\{ \pm\omega + \sum_{j=1}^\ell k_j \omega_j \,\bigg\} \geq \f{C}{(1+\abs{\bm k})^{s}}
\end{equation}
where $\{x\}=x\bmod1$ denotes the fractional part of $x$. It is known that the Diophantine condition is satisfied for Lebesgue a.e.\ $\bm\omega$. Hence, by a theorem of Totik \cite{To01}, Lebesgue a.e.\ $\{\alpha_j,\beta_j\}_{j=1}^{\ell+1}$ lead to the condition \eqref{3.6a}.

We start by noting that \eqref{1.6} is trivially satisfied since $\alpha>\f12$. On the other hand, it is easy to see that \eqref{3.3} fails to hold since $\alpha<1$. Next, we verify \eqref{1.6a}--\eqref{1.6b}. It follows from Theorem~2.6 in \cite[Chap.~I]{Zy88} that
\begin{align} \lb{3.7a}
\sum_{n=1}^\infty \f{e^{2\pi i x n}}{n^\alpha}
\end{align}
converges uniformly for $x$ in $[\eps,1-\eps]$ for every $\eps>0$. Moreover, by (2.26)--(2.27) in \cite[Chap.~V]{Zy88} there exists a constant $D>0$ so that for all $x\in[\eps,1-\eps]$,
\begin{align} \lb{3.7b}
\sup_N \bigg| \sum_{n=1}^N \f{e^{2\pi i x n}}{n^\alpha} \,\bigg| < \f{D}{\eps}
\end{align}
So taking $x=\pm\omega+\bm k\cdot\bm\omega$ in \eqref{3.7a} yields convergence of the series
\begin{align} \lb{3.8a}
\sum_{n=1}^\infty \, \f{\cos(2\pi \omega n)}{n^\alpha} \, e^{2\pi i (\bm k\cdot\bm\omega) n}
\end{align}
for all $\bm k\in\bbZ^\ell$ since by \eqref{3.6a}, $x\neq0\bmod 1$. Hence, \eqref{1.6a} holds. Similarly, taking $x=\pm\omega+\bm k\cdot\bm\omega$ in \eqref{3.7b} and utilizing \eqref{3.6a} gives
\begin{align}
\sup_{N\in\bbN} \left|\, \sum_{n=1}^N \, \f{\cos(2\pi \omega n)}{n^\alpha} \, e^{2\pi i (\bm k\cdot\bm\omega) n} \,\right| \le \f{D}{C}(1+|\bm k|)^s
\end{align}
Hence, \eqref{1.6b} is also satisfied. Thus we have Szeg\H{o} asymptotics by Theorem~\ref{T1.2}.
\end{example}

Our second example illustrates 
that neither the $\ell^1$-condition \eqref{3.3} nor belonging to the Szeg\H{o} class is necessary for Szeg\H{o} asymptotics to hold.


\begin{example}
Let $\{\tilde{a}_n, \tilde{b}_n\}_{n=-\infty}^\infty$ be an arbitrary element of the isospectral torus $\calT_\fre$. Pick $\alpha\in(\f34,1)$ and let $a_n$, $b_n$ be given as in \eqref{3.4a} but now with
\begin{align} \lb{3.5}
\delta a_n = 0, \quad \delta b_n=\f{1}{n^\alpha}\cos(2\pi \sqrt{n})
\end{align}
Then \eqref{1.6} is trivially satisfied since $\alpha>\f12$ and \eqref{3.3} fails to hold since $\alpha<1$. To show that \eqref{1.6a}--\eqref{1.6b} are fulfilled, we note that Theorem~5.2(i) in \cite[Chap.~V]{Zy88} with $2\pi$ adjusted in both exponents implies that
\begin{align}
\sum_{n=1}^\infty \f{e^{2\pi i \sqrt{n}} }{n^\alpha} \, e^{2\pi i n x}
\end{align}
converges uniformly in $x$ to a continuous function on $[-\f12,\f12]$. Combining real and imaginary parts of this series at $x=\pm\omega$ yields convergence of the series
\begin{align} \lb{3.6}
\sum_{n=1}^\infty \, \f{\cos(2\pi\sqrt{n})}{n^\alpha} \, e^{2\pi in\omega}
\end{align}
for all $\omega\in[0,1]$. Moreover, the partial sums are uniformly bounded
\begin{align} \lb{3.7}
\sup_{\omega\in[0,1]} \, \sup_{N\in\bbN}
\left|\, \sum_{n=1}^N \, \f{\cos(2\pi\sqrt{n})}{n^\alpha} \, e^{2\pi in\omega} \,\right| \le C
\end{align}
Since \eqref{3.6}--\eqref{3.7} imply \eqref{1.6a}--\eqref{1.6b}, we have Szeg\H{o} asymptotics by Theorem~\ref{T1.2}.

Next, we show that the eigenvalues $\{x_k\}$ in $\bbR\setminus\fre$ of the Jacobi matrix $J$ with parameters $\{a_n, b_n\}_{n=1}^\infty$ satisfy
\begin{equation} \lb{3.8}
\sum_k \dist (x_k,\fre)^q = \infty
\end{equation}
for any $q<\f{1}{2\alpha}$, in particular, for $q=\f12$. Hence, the Blaschke condition \eqref{3.2} fails and, by \cite[Thm.~4.1]{CSZ2}, also the Szeg\H{o} condition \eqref{3.1} fails to hold in the present example.

Indeed, let
\begin{equation}
B_m=\bigl[(8m)^2-m,(8m)^2+m\bigr], \quad m\ge1
\end{equation}
denote disjoint subsets of $\bbN$. Recall that, by
\cite[Thm.~9.6]{CSZ1}, the unperturbed difference equation
\begin{equation} \lb{3.9}
\ti a_n w_{n+1} + (\ti b_n-\beta_{\ell+1}) w_n + \ti a_{n-1} w_{n-1} =0
\end{equation}
has a positive bounded solution $u_n$ so that
\begin{align} \lb{3.9a}
c_1 \le u_n \le c_2
\end{align}
uniformly in $n$ with some fixed positive constants $c_1$, $c_2$.
In addition, by \cite[Thm.~9.7]{CSZ1}, \eqref{3.9} also has an increasing solution $v_n$ which, by \cite[Eq.~(9.31)]{CSZ1}, is of the form
\begin{align} \lb{3.9b}
v_n = \kappa n u_n + s_n
\end{align}
where $\kappa$ is a fixed positive constant and $\{s_n\}$ a bounded
real-valued sequence. Let $c_3$ be an upper bound for $|s_n|$ and
$\mu_{\mp}=(8m)^2\mp m$ be the left (resp.\ right) endpoint of $B_m$. Then,
by \eqref{3.9a}--\eqref{3.9b}, the linear combinations
\begin{align} \lb{3.9c}
\pm (v_n  - \kappa\mu_{\mp} u_n) + (c_3/c_1)u_n
\end{align}
are positive on $B_m$ and of magnitude at least $m \kappa c_1$ at the
center of $B_m$ and at most $c_3(1+c_2/c_1)$ at the left (resp.\
right) endpoint of $B_m$. Rescaling the expression in \eqref{3.9c}, we
obtain solutions $v^{(m)}_{\pm}$ of \eqref{3.9} with the following
properties:


\begin{itemize}
\item
$v^{(m)}_{\pm}$ are positive on $B_m$ and equal to $1$ at the center of $B_m$,
\smallskip
\item
$v^{(m)}_{+}$ is of order $\mathcal{O}(1/m)$ at the left endpoint of $B_m$, and
\smallskip
\item
$v^{(m)}_{-}$ is of order $\mathcal{O}(1/m)$ at the right endpoint of $B_m$.
\end{itemize}
Now define $\phi^{(m)}$ by
\begin{equation*}
\bigl(\phi^{(m)}\bigr)_n=\begin{cases}
\bigl(v^{(m)}_+\bigr)_n & \mbox{if } n\in\bigl[(8m)^2-m,(8m)^2\bigr] \\
\bigl(v^{(m)}_-\bigr)_n & \mbox{if } n\in\bigl[(8m)^2,(8m)^2+m\bigr] \\
\quad\, 0 & \mbox{if } n\not\in B_m
\end{cases}
\end{equation*}
Then there is a constant $C>0$ (independent of $m$) such that
\begin{align}
\|\phi^{(m)}\|^2 \ge C m
\end{align}
As $\bigl\vert\sqrt{(8m)^2\pm m}-8m\bigr\vert<\f18$, we have $\delta b_n > C'm^{-2\alpha}$ for $n\in B_m$ so that
\begin{align}
\bigl\langle \phi^{(m)}, (J-\tilde{J})\phi^{(m)}\bigr\rangle \ge \f{C' \|\phi^{(m)}\|^2}{m^{2\alpha}}
\end{align}
Moreover, since $\phi^{(m)}$ satisfies \eqref{3.9} for all $n$ except at the center and near the endpoints of $B_m$,
\begin{align}
\bigl\vert\bigl\langle \phi^{(m)}, (\ti J - \beta_{\ell+1}) \phi^{(m)}\bigr\rangle\bigr\vert \le \f{C''}{m}
\end{align}
Combining the above inequalities gives
\begin{align}
\f{\bigl\langle \phi^{(m)}, (J - \beta_{\ell+1}) \phi^{(m)}\bigr\rangle}{\|\phi^{(m)}\|^2} \ge \f{C'}{m^{2\alpha}} - \f{C''/C}{m^2}
\end{align}
Hence, for large $m$ and some small constant $D>0$, we have
\begin{align}
\dist(x_m,\fre) \ge \f{D}{m^{2\alpha}}
\end{align}
Thus, \eqref{3.8} holds for all $q<\f{1}{2\alpha}$.
\end{example}

\begin{remark}
With the additional Diophantine assumption \eqref{3.6a} and somewhat more delicate estimates as in \cite[Sect.~9]{Jost1}, one can show that there are examples where \eqref{3.8} holds with $q$ arbitrarily close to $\f32$, $q<\f32$.
\end{remark}


\medskip

\noindent {\bf Acknowledgements.}
J.S.C. and M.Z. gratefully acknowledge the kind invitation and hospitality of the Mathematics Department of Caltech where this work was completed.

\medskip


\end{document}